\theoremstyle{definition}
\newtheorem*{theoA}{Theorem A}
\newtheorem{theo}{Theorem}[section]
\newtheorem{lem}{Lemma}[section]
\newtheorem{defi}{Definition}[section]
\newcommand{\ol}{\overline}
\newcommand{\be}{\begin{equation}}
\newcommand{\ee}{\end{equation}}
\newcommand{\beas}{\begin{eqnarray*}}
\newcommand{\eeas}{\end{eqnarray*}}
\newcommand{\bea}{\begin{eqnarray}}
\newcommand{\eea}{\end{eqnarray}}
\newcommand{\lra}{\longrightarrow}
\numberwithin{equation}{section}
\begin{document}
\title[ A simple proof of the Chuang's inequality]{A simple proof of the Chuang's inequality}
\date{}
\author[B. Chakraborty ]{Bikash Chakraborty }
\date{}
\address{ Department of Mathematics, University of Kalyani, West Bengal,  India-741235.}
\address{Department of Mathematics, Ramakrishna Mission Vivekananda Centenary College, Rahara, India-700 118}
\email{bikashchakraborty.math@yahoo.com, bikashchakrabortyy@gmail.com.}
\maketitle
\let\thefootnote\relax
\footnotetext{2000 Mathematics Subject Classification: 30D35.}
\footnotetext{Key words and phrases: Meromorphic function, small function, differential polynomial.}
\footnotetext{Type set by \AmS -\LaTeX}
\setcounter{footnote}{0}
\begin{abstract} The purpose of the paper is to present an short proof of the Chuang's inequality (\cite{3a}).
\end{abstract}
\section{Introduction Definitions and Results}
Originally, the Chuang's inequality is a standard estimate in Nevanlinna theory but later on this inequality is used as a valuable tool in the study
of value distribution of differential polynomials. For example, Recently, using this inequality, some sufficient conditions are obtained for which two differential polynomials sharing a small function satisfies the conclusions of Br\"{u}ck conjecture (\cite{bb}, \cite{bc1}, \cite{bc2}, \cite{bm}). \par
At this point, we recall some notations and definitions to proceed further.\par
It will be convenient to let $E$ denote any set of positive real numbers of finite linear measure, not necessarily the same at each occurrence.\par
Let $f$ be a non constant meromorphic function in the open complex plane $\mathbb{C}$. For any non-constant meromorphic function $f$, we denote by $S(r, f)$ any quantity satisfying $$S(r, f) = o(T(r, f))\;\;\;\;\;\;\;\;\;\;\ (r\lra \infty, r\not\in E).$$
A meromorphic function $a(\not\equiv \infty)$ is called a small function with respect to $f$ provided that $T(r,a)=S(r,f)$ as $r\lra \infty, r\not\in E$.\par
We use $I$ to denote any set of infinite linear measure of $0<r<\infty$.\\
Now  we recall the following definition.
\begin{defi} (\cite{4})Let $n_{0j},n_{1j},\ldots,n_{kj}$ be non negative integers.\\
The expression $M_{j}[f]=(f)^{n_{0j}}(f^{(1)})^{n_{1j}}\ldots(f^{(k)})^{n_{kj}}$ is called a differential monomial generated by $f$ of degree $d(M_{j})=\sum\limits_{i=0}^{k}n_{ij}$ and weight
$\Gamma_{M_{j}}=\sum\limits_{i=0}^{k}(i+1)n_{ij}$.

The sum $P[f]=\sum\limits_{j=1}^{t}b_{j}M_{j}[f]$ is called a differential polynomial generated by $f$ of degree $\ol{d}(P)=max\{d(M_{j}):1\leq j\leq t\}$
and weight $\Gamma_{P}=max\{\Gamma_{M_{j}}:1\leq j\leq t\}$, where $T(r,b_{j})=S(r,f)$ for $j=1,2,\ldots,t$.

The numbers $\underline{d}(P)=min\{d(M_{j}):1\leq j\leq t\}$ and k (the highest order of the derivative of $f$ in $P[f]$ are called respectively the lower degree and order of $P[f]$.

$P[f]$ is said to be homogeneous if $\ol{d}(P)$=$\underline{d}(P)$.

$P[f]$ is called a Linear Differential Polynomial generated by $f$ if $\ol {d} (P)=1$. Otherwise $P[f]$ is called Non-linear Differential Polynomial. We also denote by $\mu =max\; \{\Gamma _{M_{j}}-d(M_{j}): 1\leq j\leq t\}=max\; \{ n_{1j}+2n_{2j}+\ldots+kn_{kj}: 1\leq j\leq t\}$.
\end{defi}
Now we are position to state the Chuang's inequality.
\begin{theoA} \label{l2.4} (\cite{3a}) Let $f$ be a meromorphic function and $P[f]$ be a differential polynomial. Then
$$ m\left(r,\frac{P[f]}{f^{\ol {d}(P)}}\right)\leq (\ol {d}(P)-\underline {d}(P)) m\left(r,\frac{1}{f}\right)+S(r,f).$$
\end{theoA}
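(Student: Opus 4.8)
The plan is to reduce everything to two ingredients that are already available: the lemma on the logarithmic derivative, $m\left(r,\frac{f^{(i)}}{f}\right)=S(r,f)$, and the smallness of the coefficients, $m(r,b_j)=S(r,f)$. The starting point is to rewrite each monomial by extracting a power of $f$ from every factor. For $M_j[f]=(f)^{n_{0j}}(f^{(1)})^{n_{1j}}\cdots(f^{(k)})^{n_{kj}}$ the total power of $f$ collected from the factors is exactly $\sum_{i=0}^{k}n_{ij}=d(M_j)$, so that
$$\frac{M_j[f]}{f^{\ol{d}(P)}}=f^{\,d(M_j)-\ol{d}(P)}\prod_{i=1}^{k}\left(\frac{f^{(i)}}{f}\right)^{n_{ij}}.$$
Since $\underline{d}(P)\le d(M_j)\le\ol{d}(P)$, the exponent $d(M_j)-\ol{d}(P)$ is nonpositive and at least $-(\ol{d}(P)-\underline{d}(P))$; thus each term carries only a power of $1/f$, with exponent at most $\ol{d}(P)-\underline{d}(P)$.

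The crucial step is to control the sum without destroying the sharp constant. A naive use of subadditivity, $m\left(r,\frac{P[f]}{f^{\ol{d}(P)}}\right)\le\sum_j m\left(r,\frac{M_j[f]}{f^{\ol{d}(P)}}\right)+S(r,f)$, would yield the coefficient $\sum_j(\ol{d}(P)-d(M_j))$ in front of $m\left(r,\frac1f\right)$, which is far larger than $\ol{d}(P)-\underline{d}(P)$; this loss of a factor comparable to $t$ is the main obstacle. To avoid it I would work directly with $\log^{+}$ on the circle $|z|=r$ and split it into the arc $E_1=\{\theta:|f(re^{i\theta})|\ge 1\}$ and its complement $E_2=\{\theta:|f(re^{i\theta})|<1\}$. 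On $E_1$ one has $|f|^{\,d(M_j)-\ol{d}(P)}\le 1$ for every $j$, so
$$\left|\frac{P[f]}{f^{\ol{d}(P)}}\right|\le\sum_{j=1}^{t}|b_j|\prod_{i=1}^{k}\left|\frac{f^{(i)}}{f}\right|^{n_{ij}},$$
whose $\log^{+}$ integrates to $S(r,f)$. On $E_2$ one has $|1/f|>1$, hence $|f|^{\,d(M_j)-\ol{d}(P)}=|1/f|^{\ol{d}(P)-d(M_j)}\le|1/f|^{\ol{d}(P)-\underline{d}(P)}$ for every $j$; factoring this common bound out before taking $\log^{+}$ gives
$$\log^{+}\left|\frac{P[f]}{f^{\ol{d}(P)}}\right|\le(\ol{d}(P)-\underline{d}(P))\log^{+}\left|\frac1f\right|+\log^{+}\left(\sum_{j=1}^{t}|b_j|\prod_{i=1}^{k}\left|\frac{f^{(i)}}{f}\right|^{n_{ij}}\right).$$

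Finally I would integrate over the whole circle and assemble the pieces. The first term on $E_2$ contributes at most $(\ol{d}(P)-\underline{d}(P))\,m\left(r,\frac1f\right)$, while on both arcs the residual expression $\log^{+}\left(\sum_j|b_j|\prod_i|f^{(i)}/f|^{n_{ij}}\right)$ is bounded, up to an additive $\log t$, by $\sum_j\log^{+}|b_j|+\sum_{j,i}n_{ij}\log^{+}|f^{(i)}/f|$, whose integral is $S(r,f)$ by the logarithmic derivative lemma and the smallness of the $b_j$. Combining the two arcs yields exactly
$$m\left(r,\frac{P[f]}{f^{\ol{d}(P)}}\right)\le(\ol{d}(P)-\underline{d}(P))\,m\left(r,\frac1f\right)+S(r,f),$$
as desired. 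The only delicate point is the one already flagged, namely performing the factorisation of $|1/f|^{\ol{d}(P)-\underline{d}(P)}$ on the arc where $|1/f|\ge 1$ so that the monomial-by-monomial inequality $|1/f|^{\ol{d}(P)-d(M_j)}\le|1/f|^{\ol{d}(P)-\underline{d}(P)}$ points the right way; this is precisely what rescues the sharp constant that a direct appeal to subadditivity would spoil.
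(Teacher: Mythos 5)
Your proof is correct, but it takes a genuinely different route from the paper. You argue directly and pointwise: you normalize each monomial as $M_j[f]/f^{\ol{d}(P)}=f^{d(M_j)-\ol{d}(P)}\prod_{i}(f^{(i)}/f)^{n_{ij}}$, split the circle according to whether $|f|\geq 1$ or $|f|<1$, and on the second arc extract the single common factor $|1/f|^{\ol{d}(P)-\underline{d}(P)}$ \emph{before} applying $\log^{+}$ to the sum, so that the leftover $\log^{+}\bigl(\sum_j|b_j|\prod_i|f^{(i)}/f|^{n_{ij}}\bigr)$ integrates to $S(r,f)$ by the logarithmic derivative lemma. The paper instead proceeds by induction on the number $t$ of monomials: it orders the degrees $m_1\leq\cdots\leq m_t$, writes $P[f]=Q[f]+bM[f]$ with $M$ the monomial of top degree, and applies subadditivity of $m$ to just two terms; the constant survives because the newly added monomial satisfies $d(M)=\ol{d}(P)$ and hence contributes $(\ol{d}(P)-d(M))\,m(r,1/f)=0$, while $\underline{d}(Q)=\underline{d}(P)$, so the coefficients telescope to $\ol{d}(P)-\underline{d}(P)$. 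You correctly diagnose that a naive term-by-term use of subadditivity over all $t$ monomials would inflate the constant to $\sum_j(\ol{d}(P)-d(M_j))$; your pointwise factorization is one cure, and the paper's ordered two-term induction is another. Your argument is the classical one and yields the clean statement in one pass without any ordering of the monomials; the paper's argument avoids all pointwise manipulation of $\log^{+}$ at the cost of the bookkeeping of degrees (and, as written, the set $E_0$ outside which the estimate holds should in principle be taken as the union of the exceptional sets arising at each stage of the induction, which is still of finite linear measure).
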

In this article, we give a short proof of the above inequality with some restriction.
\begin{theo} \label{l2.5} Let $f$ be a meromorphic function and $P[f]$ be a differential polynomial. Then
$$ m\left(r,\frac{P[f]}{f^{\ol {d}(P)}}\right)\leq (\ol {d}(P)-\underline {d}(P)) m\left(r,\frac{1}{f}\right)+S(r,f)~\text{as}~ r \to \infty~\text{and}~r \not\in E_{0},$$
where $E_{0}$ is a set whose linear measure is not greater than $2$.
\end{theo}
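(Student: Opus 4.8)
The plan is to bring the whole estimate down to the level of a single pointwise inequality on each circle $|z|=r$, arranged so that the coefficient $\ol{d}(P)-\underline{d}(P)$ multiplies $m(r,1/f)$ exactly once rather than once per monomial. The starting identity is
$$\frac{M_{j}[f]}{f^{\ol{d}(P)}}=\left(\frac{1}{f}\right)^{\ol{d}(P)-d(M_{j})}\prod_{i=1}^{k}\left(\frac{f^{(i)}}{f}\right)^{n_{ij}},$$
which follows from $M_{j}[f]/f^{d(M_{j})}=\prod_{i=1}^{k}(f^{(i)}/f)^{n_{ij}}$ together with $d(M_{j})\leq\ol{d}(P)$, so that the exponent $\ol{d}(P)-d(M_{j})$ is nonnegative. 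A naive term-by-term bound would give $m(r,M_{j}[f]/f^{\ol{d}(P)})\leq(\ol{d}(P)-\underline{d}(P))\,m(r,1/f)+S(r,f)$ and then, after summing over the $t$ monomials, the wrong coefficient $t(\ol{d}(P)-\underline{d}(P))$; avoiding this loss is the crux of the argument.

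The decisive step is therefore a pointwise estimate. Writing $s=\ol{d}(P)-\underline{d}(P)$ and $s_{j}=\ol{d}(P)-d(M_{j})$, I would note $0\leq s_{j}\leq s$ and split according to the size of $|f(z)|$: if $|f(z)|\geq 1$ then $|1/f|^{s_{j}}\leq 1$, while if $|f(z)|<1$ then $|1/f|^{s_{j}}\leq|1/f|^{s}$ since the base exceeds $1$ and $s_{j}\leq s$. In either case $|1/f|^{s_{j}}\leq\max\{1,|1/f|^{s}\}$, and as this factor is independent of $j$ it can be pulled out of the sum, giving
$$\left|\frac{P[f]}{f^{\ol{d}(P)}}\right|\leq\max\{1,|1/f|^{s}\}\sum_{j=1}^{t}|b_{j}|\prod_{i=1}^{k}\left|\frac{f^{(i)}}{f}\right|^{n_{ij}}.$$

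Now take $\log^{+}$, use $\log^{+}(xy)\leq\log^{+}x+\log^{+}y$ and the identity $\log^{+}\max\{1,|1/f|^{s}\}=s\,\log^{+}|1/f|$, and integrate over $|z|=r$. The first factor contributes exactly $s\,m(r,1/f)$, and the remaining integral is at most $\sum_{j}\big[m(r,b_{j})+\sum_{i}n_{ij}\,m(r,f^{(i)}/f)\big]+\log t$ after using $\log^{+}(\sum_{j}a_{j})\leq\sum_{j}\log^{+}a_{j}+\log t$. Since the $b_{j}$ are small functions, $m(r,b_{j})=S(r,f)$, and $m(r,f^{(i)}/f)=S(r,f)$ by the lemma on the logarithmic derivative; hence the entire remainder is $S(r,f)$ and the stated inequality follows.

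What genuinely needs care is the exceptional set, and this is where I expect the work to concentrate. The algebraic identity and the pointwise inequality hold for every $r$, so $E_{0}$ is produced solely by the estimates $m(r,f^{(i)}/f)=S(r,f)$. To force its linear measure down to $2$, I would use the explicit logarithmic-derivative bound $m(r,f^{(i)}/f)=O\big(\log^{+}T(R,f)+\log\frac{R}{R-r}+\log r+1\big)$ valid for $0<r<R$, then set $R=r+1/T(r,f)$ and control $T(R,f)$ by the Borel-type growth lemma: for $T(r)$ nondecreasing with $T(r)\geq 1$ one has $T\big(r+1/T(r)\big)<2T(r)$ for all $r$ outside a set of length at most $2$. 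This one application of the growth lemma is exactly what produces an exceptional set $E_{0}$ of measure not greater than $2$, and since it is the only source of exclusion, the constant $2$ is inherited directly from it.
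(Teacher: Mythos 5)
Your proof is correct, but it takes a genuinely different route from the paper's. You argue directly and non-inductively: after writing $M_{j}[f]/f^{\ol{d}(P)}=(1/f)^{\ol{d}(P)-d(M_{j})}\prod_{i}(f^{(i)}/f)^{n_{ij}}$, you absorb all the varying exponents $\ol{d}(P)-d(M_{j})$ into the single pointwise majorant $\max\{1,|1/f|^{\ol{d}(P)-\underline{d}(P)}\}$, pull that common factor out of the sum \emph{before} taking $\log^{+}$, and thereby collect the coefficient $\ol{d}(P)-\underline{d}(P)$ exactly once; the remainder is $S(r,f)$ by the lemma on the logarithmic derivative, which is also the sole source of the exceptional set of measure at most $2$. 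The paper instead inducts on the number $t$ of monomials, ordered so that $m_{1}\leq\cdots\leq m_{t}$: it splits $P[f]=Q[f]+bM[f]$ with $M$ the monomial of top degree, applies the inductive hypothesis to $Q$, uses subadditivity of $m$ together with the renormalisation $m(r,Q[f]/f^{\ol{d}(P)})\leq m(r,Q[f]/f^{\ol{d}(Q)})+(\ol{d}(P)-\ol{d}(Q))\,m(r,1/f)$, and closes with the telescoping computation $(\ol{d}(Q)-\underline{d}(Q))+(\ol{d}(P)-\ol{d}(Q))+(\ol{d}(P)-d(M))=(\ol{d}(P)-\underline{d}(P))+(m_{1}-m_{l})\leq \ol{d}(P)-\underline{d}(P)$. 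Your one-pass argument is essentially the classical proof of Chuang's inequality: it exposes, at the pointwise level, exactly why the naive term-by-term bound's loss of a factor $t$ is avoidable, at the cost of a little more $\log^{+}$ bookkeeping; the paper's induction is shorter to write given the base case, but it depends on ordering the monomials by degree and conceals the pointwise mechanism. Your closing discussion deriving the measure bound $2$ from the Borel growth lemma is more detail than the paper supplies (it simply quotes the logarithmic-derivative lemma with that exceptional set), but it is accurate and correctly identifies the only place the exceptional set enters.
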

 For this, we need to recall the lemma of logarithmic derivative.
\begin{lem}[\emph{Lemma of Logarithmic Derivative}] (\cite{1234})
Suppose that $f(z)$ is a non constant meromorphic function in whole complex plane. Then
$$m\bigg(r,\frac{f'}{f}\bigg)=S(r,f)~\text{as}~ r \to \infty~\text{and}~r \not\in E_{0},$$
where $E_{0}$ is a set whose linear measure is not greater than $2$.
\end{lem}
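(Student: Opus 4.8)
The plan is to derive the lemma from the Poisson--Jensen formula, which is the classical route to the logarithmic derivative estimate. First I would fix $0<r<R$ and let $a_\mu$ and $b_\nu$ denote the zeros and poles of $f$ in $|z|<R$, each repeated according to multiplicity. Applying the Poisson--Jensen representation of $\log|f|$ to an analytic branch of $\log f$ and differentiating, one obtains for $|z|=r$
$$\frac{f'(z)}{f(z)}=\frac{1}{2\pi}\int_0^{2\pi}\log|f(Re^{i\theta})|\,\frac{2Re^{i\theta}}{(Re^{i\theta}-z)^2}\,d\theta+\sum_\mu\left(\frac{1}{z-a_\mu}+\frac{\overline{a_\mu}}{R^2-\overline{a_\mu}z}\right)-\sum_\nu\left(\frac{1}{z-b_\nu}+\frac{\overline{b_\nu}}{R^2-\overline{b_\nu}z}\right).$$
This exact identity is the source of all subsequent bounds.

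Second, I would estimate $m\left(r,\frac{f'}{f}\right)=\frac{1}{2\pi}\int_0^{2\pi}\log^+\big|(f'/f)(re^{i\phi})\big|\,d\phi$ by taking $\log^+$ in the identity above and using the concavity of $\log$. Applying Jensen's inequality to the $\phi$--integral, the Poisson kernel term is dominated in $\log^+$ by $\log^+$ of a constant multiple of $\frac{R}{(R-r)^2}\int_0^{2\pi}\big|\log|f(Re^{i\theta})|\big|\,d\theta$, and here $\int_0^{2\pi}\big|\log|f(Re^{i\theta})|\big|\,d\theta=2\pi\big(m(R,f)+m(R,1/f)\big)=O\big(T(R,f)\big)$ by the first main theorem. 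The finitely many zero-- and pole--terms are controlled through $\log^+\sum|c_k|\le\sum\log^+|c_k|+\log(\#k)$, the uniform bound $\frac{1}{2\pi}\int_0^{2\pi}\log^+\frac{1}{|re^{i\phi}-a|}\,d\phi=O\big(1+\log^+\tfrac1r\big)$, and the counting estimate $n(R,f)+n(R,1/f)=O\!\left(\tfrac{R}{R-r}T(R,f)\right)$. Collecting the pieces produces an absolute constant $C$ with
$$m\left(r,\frac{f'}{f}\right)\le C\left\{\log^+T(R,f)+\log^+\frac{1}{R-r}+\log^+R+\log^+\frac1r+1\right\}\qquad(0<r<R).$$
This bookkeeping is the step I expect to be the main obstacle, since the concavity, counting and kernel estimates must be arranged so that only $\log^+T(R,f)$ and the geometric factors survive.

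Finally, I would convert this two--radius inequality into the stated one--radius conclusion by a Borel--type growth lemma: if $T(r,f)$ is continuous, nondecreasing and $\ge 1$, then $T\!\left(r+\tfrac{1}{T(r,f)},f\right)\le 2\,T(r,f)$ for every $r$ outside an exceptional set $E_0$. Choosing the exceptional intervals greedily and summing the geometric series $\sum_{n\ge1}2^{-(n-1)}=2$ shows that the linear measure of $E_0$ is at most $2$, which is precisely where the constant in the statement comes from. Taking $R=r+\frac{1}{T(r,f)}$, so that $R-r=\frac{1}{T(r,f)}$ and $T(R,f)\le 2\,T(r,f)$ for $r\notin E_0$, the displayed bound collapses to
$$m\left(r,\frac{f'}{f}\right)\le C'\big\{\log^+T(r,f)+\log r+1\big\}=o\big(T(r,f)\big)=S(r,f)\qquad(r\lra\infty,\ r\notin E_0),$$
which is the assertion of the lemma.
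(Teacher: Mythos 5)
The paper offers no proof of this lemma to compare against: it is quoted as a known result from Yang--Yi \cite{1234}, so what you have done is reconstruct the classical argument that the cited reference (and Hayman's book) contains. Your outline is the standard and correct route: the differentiated Poisson--Jensen identity, the two-radius estimate $m\left(r,\frac{f'}{f}\right)\leq C\left\{\log^{+}T(R,f)+\log^{+}\frac{1}{R-r}+\log^{+}R+\log^{+}\frac{1}{r}+1\right\}$, and then the Borel growth lemma with the choice $R=r+\frac{1}{T(r,f)}$, whose greedy-interval proof does yield an exceptional set of measure at most $\sum_{n\geq 1}2^{-(n-1)}=2$, which is indeed where the constant $2$ in the statement comes from.

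Three points need repair, one of which is a genuine (if small) gap. First, Poisson--Jensen requires $f(0)\neq 0,\infty$, and the honest two-radius inequality carries an extra term $\log^{+}\log^{+}\frac{1}{|f(0)|}$; your constant $C$ is therefore not absolute but depends on the normalization of $f$ at the origin (harmless for a fixed $f$, but the cases $f(0)=0$ or $f(0)=\infty$ must be reduced to the generic one, e.g.\ by passing to $z^{-m}f(z)$). Second, the counting estimate as you wrote it, $n(R,f)+n(R,1/f)=O\left(\frac{R}{R-r}\,T(R,f)\right)$, cannot hold with the same radius on both sides; one applies the identity at an intermediate radius $\rho=\frac{r+R}{2}$ and bounds $n(\rho,\cdot)$ via $n(\rho,\cdot)\log\frac{R}{\rho}\leq N(R,\cdot)+O(1)$, which is what produces the geometric factor. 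Third, and this is the real gap: your final line asserts $C'\left\{\log^{+}T(r,f)+\log r+1\right\}=o\left(T(r,f)\right)$, but $\log r=o(T(r,f))$ is \emph{false} when $f$ is a nonconstant rational function, since then $T(r,f)\sim d\log r$. The lemma is still true in that case, but for a different reason: $\frac{f'}{f}=O\left(\frac{1}{|z|}\right)$ as $z\to\infty$, so $m\left(r,\frac{f'}{f}\right)\to 0$ directly. You should either dispose of rational $f$ separately at the outset or note that the asymptotic step of your argument is valid exactly for transcendental $f$, where $\log r=o(T(r,f))$ does hold. With these patches your proof is complete and is essentially the one in the source the paper cites.
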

\begin{lem}\label{ghgh}
Suppose that $f(z)$ is a non constant meromorphic function in whole complex plane and $l$ is natural number. Then
$$m\bigg(r,\frac{f^{(l)}}{f}\bigg)=S(r,f)~\text{as}~ r \to \infty~\text{and}~r \not\in E_{0},$$
where $E_{0}$ is a set whose linear measure is not greater than $2$.
\end{lem}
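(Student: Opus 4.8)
The plan is to reduce the higher–order logarithmic derivative to a product of ordinary ones and then feed each factor into the Lemma of Logarithmic Derivative. First I would record the telescoping identity, valid as an identity of meromorphic functions,
$$\frac{f^{(l)}}{f}=\frac{f^{(l)}}{f^{(l-1)}}\cdot\frac{f^{(l-1)}}{f^{(l-2)}}\cdots\frac{f'}{f}=\prod_{j=1}^{l}\frac{(f^{(j-1)})'}{f^{(j-1)}}.$$
Since $\log^{+}|FG|\le\log^{+}|F|+\log^{+}|G|$, the proximity function is subadditive on products, and therefore
$$m\!\left(r,\frac{f^{(l)}}{f}\right)\le\sum_{j=1}^{l}m\!\left(r,\frac{(f^{(j-1)})'}{f^{(j-1)}}\right).$$
Each summand is the proximity function of the ordinary logarithmic derivative of $g_{j}:=f^{(j-1)}$, so the Lemma of Logarithmic Derivative gives $m(r,g_{j}'/g_{j})=S(r,g_{j})$ for each $j$.

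To turn these into $S(r,f)$ I would invoke the standard growth comparison $T(r,f^{(k)})\le(k+1)\,T(r,f)+S(r,f)$, which follows from $N(r,f^{(k)})\le(k+1)N(r,f)$ together with the first–order lemma applied iteratively. This yields $T(r,f^{(j-1)})=O(T(r,f))$, hence $S(r,f^{(j-1)})=S(r,f)$ for every fixed $j$, and a finite sum of such terms is again $S(r,f)$. At the level of growth orders this already delivers the desired inequality.

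The one genuine difficulty, and the step where the real work lies, is the exceptional set. Applying the lemma separately to each of the $l$ functions $g_{1},\dots,g_{l}$ produces $l$ exceptional sets, each of linear measure at most $2$, whose union could have measure as large as $2l$; this would violate the claimed bound. To avoid this I would use not the lemma as a black box but its quantitative form, namely an estimate of the type
$$m\!\left(r,\frac{g'}{g}\right)\le C\left(\log^{+}T(\rho,g)+\log^{+}\frac{1}{\rho-r}+\log^{+}r+1\right)\qquad(0<r<\rho),$$
combined with the Borel growth lemma that gives $T(\rho,f)\le 2\,T(r,f)$ for $\rho=r+\tfrac{1}{T(r,f)}$ outside a single set $E_{0}$ of linear measure at most $2$. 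Choosing this one value of $\rho$ simultaneously for all $g_{j}=f^{(j-1)}$ and bounding $T(\rho,f^{(j-1)})\le j\,T(\rho,f)+S(\rho,f)=O(T(r,f))$ through the comparison above, each summand becomes $O\!\left(\log^{+}T(r,f)+\log^{+}r\right)=S(r,f)$ on the complement of the same $E_{0}$. Because only a single application of the growth lemma is made, the exceptional set stays of measure at most $2$, which is precisely the point at which the naive union argument breaks down.
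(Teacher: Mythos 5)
The paper offers no proof of this lemma at all---it is stated immediately after the Lemma of Logarithmic Derivative as a known consequence, with no argument and no citation---so there is no ``paper's proof'' to compare against; your argument is the standard one and is essentially correct. The telescoping factorization, the subadditivity of $m$ on products via $\log^{+}|FG|\le\log^{+}|F|+\log^{+}|G|$, and above all your recognition that naively taking the union of $l$ exceptional sets could produce measure $2l$ (violating the stated bound of $2$) and must be replaced by the quantitative form of the estimate together with a single application of the Borel growth lemma at one common radius $\rho$: that is exactly where the content of the lemma lies, and you have put the work in the right place. Two small points should be tightened. First, the factorization $f^{(l)}/f=\prod_{j=1}^{l}(f^{(j-1)})'/f^{(j-1)}$ requires $f^{(j-1)}\not\equiv 0$ for every $j\le l$; if $f$ is a polynomial of degree less than $l$ this degenerates, but then $f^{(l)}\equiv 0$ and $m(r,f^{(l)}/f)=0$, so a one-line disclaimer disposes of that case. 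Second, your growth comparison $T(r,f^{(j-1)})=O(T(r,f))$ is itself obtained from $m(r,f^{(j-1)}/f)=S(r,f)$, which is the lemma being proved at lower order; to avoid this circularity the argument should formally be organized as an induction on $l$, with the quantitative logarithmic-derivative estimate and the single Borel radius chosen inside the inductive step. With that reorganization the proof is complete, and it supplies precisely the justification the paper omits.
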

\section {Proof of Chuang's inequality}
\begin{proof} [Proof of theorem \ref{l2.5}]
Suppose that $P[f]=\sum_{j=1}^{t}b_{j}M_{j}[f]$ be a differential polynomial generated by a non constant meromorphic function $f$. Further suppose that $m_{j}=d(M_{j})$ for $j=1,2,...,t$. Without loss of any generality, we can assume that $m_1\leq m_2\leq...\leq m_t$.\par We have to prove this inequality by induction on $t$.\\
If $t=1$, then in view of Lemma \ref{ghgh} the inequality follows. Next we assume that the inequality holds for $t=l(\geq2)$. Now we have to show that the inequality holds for $t=l+1$.\\
Assume $$P[f]=\sum\limits_{j=1}^{l+1}b_{j}M_{j}[f]=Q[f]+bM[f],$$
where $Q[f]=\sum\limits_{j=1}^{l}b_{j}M_{j}[f]$, $M[f]=M_{l+1}[f]$ and $b=b_{l+1}$.\\
Then $m_1\leq m_2\leq...\leq m_l\leq m_{l+1}$ and by hypothesis $$m\left(r,\frac{Q[f]}{f^{\ol {d}(Q)}}\right)\leq (\ol {d}(Q)-\underline {d}(Q)) m\left(r,\frac{1}{f}\right)+S(r,f)~\text{as}~ r \to \infty~\text{and}~r \not\in E_{0},$$
where $E_{0}$ is a set whose linear measure is not greater than $2$.\par
Thus \beas m\left(r,\frac{P[f]}{f^{\ol {d}(P)}}\right)&=&m\left(r,\frac{Q[f]+b M[f]}{f^{\ol {d}(P)}}\right)\\
&\leq& m\left(r,\frac{Q[f]}{f^{\ol {d}(P)}}\right)+m\left(r,\frac{ M[f]}{f^{\ol {d}(P)}}\right)+S(r,f)\\
&\leq& (\ol {d}(Q)-\underline {d}(Q)) m\left(r,\frac{1}{f}\right)+(\ol {d}(P)-\ol {d}(Q)) m\left(r,\frac{1}{f}\right)\\
&+&(\ol {d}(P)-d(M)) m\left(r,\frac{1}{f}\right)+S(r,f)\\
&\leq& (\ol {d}(P)-\underline {d}(P)) m\left(r,\frac{1}{f}\right)\\
&+& (\ol {d}(P)+\underline {d}(P)-\ol{d}(Q)-d(M)) m\left(r,\frac{1}{f}\right)+S(r,f)\\
&\leq& (\ol {d}(P)-\underline {d}(P)) m\left(r,\frac{1}{f}\right)+S(r,f)
\eeas
as $r \to \infty~\text{and}~r \not\in E_{0}$, where $E_{0}$ is a set whose linear measure is not greater than $2$, and
$(\ol {d}(P)+\underline {d}(P)-\ol{d}(Q)-d(M))=m_{l+1}+m_{1}-m_{l}-m_{l+1}\leq 0.$\\
Thus by the principle of Mathematical Induction, the inequality follows.
\end{proof}

\end{document}